\def \R{\mathbb{R}}
\def \E{\mathbb{E}}
\def \bf{\textbf}
\def \it{\textit}
\def \ni {\noindent}
\begin{document}

\title[  Controllability  stochastic    integro-differential equations with fBm]
{CONTROLLABILITY OF  NEUTRAL STOCHASTIC  FUNCTIONAL
INTEGRO-DIFFERENTIAL EQUATIONS DRIVEN BY    FRACTIONAL BROWNIAN
MOTION}

\author[E. Lakhel ]{ El Hassan  Lakhel  }
\maketitle
\begin{center}{ National School of
Applied Sciences, Cadi Ayyad University, 46000 Safi ,\\
Morocco}\end{center}

 \vspace{0.8cm}
\begin{abstract}  This paper  focuses on controllability results of
   stochastic delay partial functional  integro-differential equations
   perturbed by     fractional Brownian motion with Hurst parameter $H\in(\frac{1}{2},1)$.
    Sufficient conditions  are established using the theory
   of resolvent operators developed  by R. Grimmer in \cite{gri82} combined with a fixed point approach
   for achieving the required result.
   An  example is provided  to illustrate the  theory.
\end{abstract}
\medskip

\footnotetext[1]{Corresponding author, Email : e.lakhel@uca.ma}

\medskip
 \ni {\bf {Keywords:}} Neutral stochastic partial  integro-differential equations,
  resolvent operators,   fractional Brownian motion.\\

 \ni \bf{ AMS Subject Classification:} 60G18;   60G22; 60H20.


\maketitle \numberwithin{equation}{section}
\newtheorem{theorem}{Theorem}[section]
\newtheorem{lemma}[theorem]{Lemma}
\newtheorem{proposition}[theorem]{Proposition}
\newtheorem{definition}[theorem]{Definition}
\newtheorem{example}[theorem]{Example}
\newtheorem{remark}[theorem]{Remark}
\allowdisplaybreaks

\ni\section{Introduction}

The noise or perturbations of a system are typically modeled by a
Brownian motion as such a process is Gauss-Markov and has
independent increments. However, empirical data from many physical
phenomena suggest that Brownian motion is often shown not to be an
effective process to use in a model. A family of processes that
seems to have wide physical applicability is fractional Brownian
motion (fBm). This process was introduced by Kolmogorov in
\cite{kolmo41} and later studied by Mandelbrot and Van Ness in
\cite{MV68}, where a stochastic integral representation in term of a
standard Brownian motion was obtained. Since the fBm  $B^H$ is not a
semimartingale if $H\neq\frac{1}{2}$ (see \cite{biagini08}), we can
not use the classical It\^{o} theory to construct a stochastic
calculus with respect to fBm.

Since some physical phenomena are naturally modeled by stochastic
partial differential equations or stochastic integro-differential
equations and the randomness can be described by a fBm, it is
important to study the controllability of infinite dimensional
equations with a fBm. Many studies of the solutions of stochastic
equations in an infinite dimensional space with a fBm have been
emerged recently, see
\cite{biagini08,boufoussi3,boufoussi2,carab,carab14,
lak15,Neuenkirch}. The literature related to  neutral stochastic
partial functional integro-differential equations driven by
    a fBm is not vast.     Very recently, in  \cite{carab13}, the authors studied the existence   and
uniqueness    of mild solutions for   a class of
   stochastic delay partial functional  integro-differential equations
     by  using the theory   of resolvent operators.

The control problems for stochastic equations driven by fractional
noise have been studied only recently and no results seem to be
available for  controllability. Motivated by \cite{carab13,shakt05},
 but the analysis for
fBm requires additional results and we have to construct a new
control, moreover,   we study the   controllability
 for the following neutral
   stochastic delay partial functional  integro-differential equations perturbed by a
   fractional Brownian motion:

  \begin{equation}\label{eq1}
 \left\{\begin{array}{llll}
d[x(t)+G(t,x(t-r(t)))]&&=[Ax(t)+G(t,x(t-r(t)))+Hu(t)]dt\\
\\ &&+\int_0^tB(t-s)[x(s) +G(s,x(s-r(s)))]dsdt\\
\\ &&+F(t,x(t-\rho(t)))dt +\sigma (t)dB^H(t),\;0\leq t
\leq T,  \\
 x(t)=\varphi (t), \; -\tau \leq t \leq 0.  
\end{array}\right.
 \end{equation}

\ni Here $A$ is the infinitesimal generator of a strongly continuous
semigroup of bounded linear operators, $(S(t))_{t\geq 0}$, in a
Hilbert space $X$ with domain $D(A)$, $B(t)$ is a closed linear
operator on $X$ with domain $D(B(t))\supset D(A) $ which independent
of $t$.  The control function $u(.)$ takes values in $L^2([0,T],U)$,
the Hilbert space of admissible control functions for a separable
Hilbert space  $U$. The symbol $H$ stands for a bounded linear
operator from $U$ into $X$.  $B^H$ is a  Fractional Brownian motion
on a real and separable Hilbert space $Y$, $r,\;
\rho:[0,+\infty)\rightarrow [0,\tau],\; (\tau
>0)
$
 are continuous and
 $
 F,G:[0,+\infty)\times X \rightarrow X,\;
\; \sigma:[0,+\infty) \rightarrow \mathcal{L}_2^0(Y,X) $
 are
appropriate functions. Here $\mathcal{L}_2^0(Y,X)$  denotes the
space of all $Q$-Hilbert-Schmidt operators from $Y$ into $X$ (see
section 2 below).

 In this paper, we study the controllability result with the help of resolvent operators. The resolvent operator is similar to the
 evolution operator for nonautonomous differential equations in a Hilbert spaces. It will not, however, be an evolution
 operator because it will not satisfy an evolution or semigroup
 property.   On the other hand, to the best of our
knowledge, there is no paper which investigates the controllability
of neutral stochastic integro-differential equations with delays
driven by a
 fractional Brownian motion . Thus, we will make the first attempt to study
such problem in this paper.

The  rest of this paper is organized as follows, In Section 2, we
introduce some notations, concepts, and basic results about
 fractional Brownian motion, Wiener integral over Hilbert spaces and we
 mention a few results and notations related with  resolvent of
 operators.  In Section 3, the controllability of the system (\ref{eq1}) is investigated via  a fixed-point analysis approach.
  Example presented in Section 4 demonstrates the controllability
  result of section 3.

\section{Preliminaries}

In this section, we recall some fundamental results needed to
establish our results. For details of this section, we refer the
reader to \cite{nualart,gri82} and
  references therein.\\

\subsection{ Fractional Brownian motion}

 Let $(\Omega,\mathcal{F},\{\mathcal{F}_t\}_{t\geq0}, \mathbb{P})$
be a complete probability space satisfying the usual condition,
which means that the filtration is right continuous increasing
family and $\mathcal{F}_0$ contains all P-null sets.

Consider a time interval $[0,T]$ with arbitrary fixed horizon $T$
and let $\{\beta^H(t) , t \in [0, T ]\}$ the one-dimensional
fractional Brownian motion with Hurst parameter $H\in(1/2,1)$. This
means by definition that $\beta^H$ is a centered Gaussian process
with covariance function:
$$ R_H(s, t) =\frac{1}{2}(t^{2H} + s^{2H}-|t-s|^{2H}).$$
 Moreover $\beta^H$ has the following Wiener
integral representation:
\begin{equation}\label{rep}
\beta^H(t) =\int_0^tK_H(t,s)d\beta(s),
 \end{equation}
where $\beta = \{\beta(t) :\; t\in [0,T]\}$ is a Wiener process, and
$K_H(t; s)$ is the kernel given by
$$K_H(t, s )=c_Hs^{\frac{1}{2}-H}\int_s^t (u-s)^{H-\frac{3}{2}}u^{H-\frac{1}{2}}du,
$$
for $t>s$, where $c_H=\sqrt{\frac{H(2H-1)}{\beta (2-2H,H-\frac{1}{2})}}$ and $\beta(,)$ denotes the Beta function. We put $K_H(t, s ) =0$ if $t\leq s$.\\
We will denote by $\mathcal{H}$ the reproducing kernel Hilbert space
of the fBm. In fact $\mathcal{H}$ is the closure of set of indicator
functions $\{1_{[0;t]},  t\in[0,T]\}$ with respect to the scalar
product
$$\langle 1_{[0,t]},1_{[0,s]}\rangle _{\mathcal{H}}=R_H(t , s).$$
The mapping $1_{[0,t]}\rightarrow \beta^H(t)$
 can be extended to an isometry between $\mathcal{H}$
and the first  Wiener chaos and we will denote by $\beta^H(\varphi)$
the image of $\varphi$ by the previous isometry.

We recall that for $\psi,\varphi \in \mathcal{H}$ their scalar
product in $\mathcal{H}$ is given by
$$\langle \psi,\varphi\rangle _{\mathcal{H}}=H(2H-1)\int_0^T\int_0^T\psi(s)\varphi(t)|t-s|^{2H-2}dsdt.
$$
Let us consider the operator $K_H^*$ from $\mathcal{H}$ to
$L^2([0,T])$ defined by
$$(K_H^*\varphi)(s)=\int_s^T\varphi(r)\frac{\partial K}{\partial
r}(r,s)dr.
$$ We refer to \cite{nualart} for the proof of the fact
that $K_H^*$ is an isometry between $\mathcal{H}$ and $L^2([0,T])$.
Moreover for any $\varphi \in \mathcal{H}$, we have
$$\beta^H(\varphi)=\int_0^T(K_H^*\varphi)(t)d\beta(t).
$$

It follows from \cite{nualart} that the elements of $\mathcal{H}$
may be not functions but distributions of negative order. In order
to obtain a space of functions contained in $\mathcal{H}$, we
consider the linear space $|\mathcal{H}|$ generated by the
measurable functions $\psi$ such that
$$\|\psi \|^2_{|\mathcal{H}|}:= \alpha_H  \int_0^T \int_0^T|\psi(s)||\psi(t)| |s-t|^{2H-2}dsdt<\infty,
$$
where $\alpha_H = H(2H-1)$. We have the following Lemma  (see
\cite{nualart})
\begin{lemma}\label{lem1}
The space $|\mathcal{H}|$ is a Banach space with the norm
$\|\psi\|_{|\mathcal{H}|}$ and we have the following inclusions
$$\mathbb{L}^2([0,T])\subseteq \mathbb{L}^{1/H}([0,T])\subseteq |\mathcal{H}|\subseteq \mathcal{H},
$$
and for any $\varphi\in \mathbb{L}^2([0,T])$, we have
$$\|\psi\|^2_{|\mathcal{H}|}\leq 2HT^{2H-1}\int_0^T
|\psi(s)|^2ds.
$$
\end{lemma}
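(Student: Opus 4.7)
The plan is to establish each assertion of the lemma separately. The Banach space structure, the three inclusions, and the final $L^2$-estimate are essentially independent, each reducing to a classical inequality or a standard measure-theoretic argument.

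First, I would verify the inclusions from left to right. The embedding $L^2([0,T])\subseteq L^{1/H}([0,T])$ is immediate from H\"older's inequality since $1 < 1/H < 2$ and $[0,T]$ has finite Lebesgue measure; one obtains $\|\psi\|_{L^{1/H}} \leq T^{H-1/2}\|\psi\|_{L^2}$. The middle inclusion $L^{1/H}([0,T]) \subseteq |\mathcal{H}|$ is where the exponent $1/H$ is sharp and the real analytic content lies: extending $\psi$ by zero to $\R$, the kernel $|s-t|^{2H-2}$ is a Riesz potential of order $2H-1$, and the Hardy--Littlewood--Sobolev inequality (applied with both dual exponents equal to $1/H$, since $2H+(2-2H)=2$) gives
$$\int_0^T\!\!\int_0^T |\psi(s)|\,|\psi(t)|\,|s-t|^{2H-2}\,ds\,dt \leq C_H \|\psi\|_{L^{1/H}}^2 < \infty.$$
For the last inclusion $|\mathcal{H}|\subseteq \mathcal{H}$, I would observe that the bilinear form given by the same double integral coincides on indicator functions with $R_H$, so by continuity and density of step functions in $|\mathcal{H}|$, every $\psi\in |\mathcal{H}|$ is identified with an element of the abstract completion $\mathcal{H}$.

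Next, for the Banach space property, positive definiteness of the norm is immediate since the kernel $|s-t|^{2H-2}$ is strictly positive off the diagonal. The triangle inequality follows because the bilinear pairing $(\varphi,\psi)\mapsto \alpha_H\int\!\int |\varphi(s)||\psi(t)||s-t|^{2H-2}\,ds\,dt$ is positive semi-definite and symmetric, so a Cauchy--Schwarz estimate applies. Completeness I would handle by a standard subsequence-Fatou argument: from any Cauchy sequence $\{\psi_n\}$ extract a subsequence converging a.e.\ to some $\psi$, then apply Fatou's lemma to $|\psi_n-\psi_m|$ in the double integral to conclude both that $\psi\in|\mathcal{H}|$ and that $\|\psi_n-\psi\|_{|\mathcal{H}|}\to 0$.

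Finally, for the quantitative bound on $L^2([0,T])$, I would apply the symmetrization $|\psi(s)||\psi(t)|\leq \frac{1}{2}(|\psi(s)|^2+|\psi(t)|^2)$ and then compute the inner integral
$$\int_0^T |s-t|^{2H-2}\,dt \;=\; \frac{s^{2H-1} + (T-s)^{2H-1}}{2H-1}\;\leq\; \frac{2T^{2H-1}}{2H-1}.$$
Substituting back and using $\alpha_H=H(2H-1)$ collapses the prefactor to $2HT^{2H-1}$, as desired. The main obstacle I anticipate is the middle inclusion: the exponent $1/H$ is sharp and genuinely requires the Hardy--Littlewood--Sobolev inequality rather than a direct computation; the other steps, once set up, are essentially routine.
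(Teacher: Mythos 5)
The paper does not prove this lemma at all --- it is quoted directly from \cite{nualart} --- and your argument is precisely the standard proof found in that reference (and in Pipiras--Taqqu): H\"older's inequality for the first inclusion, Hardy--Littlewood--Sobolev with $p=q=1/H$ (noting $1/p+1/q+(2-2H)=2$) for the second, identification of the bilinear form with $R_H$ on step functions for the third, and the symmetrization $|\psi(s)||\psi(t)|\le\tfrac12(|\psi(s)|^2+|\psi(t)|^2)$ together with $\int_0^T|s-t|^{2H-2}\,dt\le 2T^{2H-1}/(2H-1)$ for the quantitative bound, so your computation of the constant $2HT^{2H-1}$ is correct. The only step where you should add a word is completeness: to justify extracting an a.e.\ convergent subsequence from a $\|\cdot\|_{|\mathcal{H}|}$-Cauchy sequence, observe that $|s-t|^{2H-2}\ge T^{2H-2}$ on $[0,T]^2$, whence $\|\psi\|_{|\mathcal{H}|}\ge \sqrt{\alpha_H}\,T^{H-1}\|\psi\|_{\mathbb{L}^1([0,T])}$, so the sequence is Cauchy in $\mathbb{L}^1$ and the Fatou argument then goes through.
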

Let $X$ and $Y$ be two real, separable Hilbert spaces and let
$\mathcal{L}(Y,X)$ be the space of bounded linear operator from $Y$
to $X$. For the sake of convenience, we shall use the same notation
to denote the norms in $X,Y$ and $\mathcal{L}(Y,X)$. Let $Q\in
\mathcal{L}(Y,Y)$ be an operator defined by $Qe_n=\lambda_n e_n$
with finite trace
 $trQ=\sum_{n=1}^{\infty}\lambda_n<\infty$. where $\lambda_n \geq 0 \; (n=1,2...)$ are non-negative
  real numbers and $\{e_n\}\;(n=1,2...)$ is a complete orthonormal basis in $Y$.
 Let $B^H=(B^H(t))$  be  $Y-$ valued fbm on
  $(\Omega,\mathcal{F}, \mathbb{P})$ with covariance $Q$ as
 $$B^H(t)=B^H_Q(t)=\sum_{n=1}^{\infty}\sqrt{\lambda_n}e_n\beta_n^H(t),
 $$
 where $\beta_n^H$ are real, independent fBm's. This process is  Gaussian, it
 starts from $0$, has zero mean and covariance:
 $$E\langle B^H(t),x\rangle\langle B^H(s),y\rangle=R(s,t)\langle Q(x),y\rangle \;\; \mbox{for all}\; x,y \in Y \;\mbox {and}\;  t,s \in [0,T].
 $$
In order to define Wiener integrals with respect to the $Q$-fBm, we
introduce the space $\mathcal{L}_2^0:=\mathcal{L}_2^0(Y,X)$  of all
$Q$-Hilbert-Schmidt operators $\psi:Y\rightarrow X$. We recall that
$\psi \in \mathcal{L}(Y,X)$ is called a $Q$-Hilbert-Schmidt
operator, if
$$  \|\psi\|_{\mathcal{L}_2^0}^2:=\sum_{n=1}^{\infty}\|\sqrt{\lambda_n}\psi e_n\|^2 <\infty,
$$
and that the space $\mathcal{L}_2^0$ equipped with the inner product
$\langle \varphi,\psi \rangle_{\mathcal{L}_2^0}=\sum_{n=1}^{\infty}\langle \varphi e_n,\psi e_n\rangle$ is a separable Hilbert space.\\

Now, let $\phi(s);\,s\in [0,T]$ be a function with values in
$\mathcal{L}_2^0(Y,X)$,
 The Wiener integral of $\phi$ with respect to $B^H$ is defined by

\begin{equation}\label{int}
\int_0^t\phi(s)dB^H(s)=\sum_{n=1}^{\infty}\int_0^t
\sqrt{\lambda_n}\phi(s)e_nd\beta^H_n(s)=\sum_{n=1}^{\infty}\int_0^t
 \sqrt{\lambda_n}(K_H^*(\phi e_n)(s)d\beta_n(s),
\end{equation}
where $\beta_n$ is the standard Brownian motion used to  present $\beta_n^H$ as in $(\ref{rep})$.\\
Now, we end this subsection by stating the following result which is
fundamental to prove our result. It can be proved by  similar
arguments as those used to prove   Lemma 2 in \cite{carab}.
\begin{lemma}\label{lem2}
If $\psi:[0,T]\rightarrow \mathcal{L}_2^0(Y,X)$ satisfies $\int_0^T
\|\psi(s)\|^2_{\mathcal{L}_2^0}ds<\infty$
 then the above sum in $(\ref{int})$ is well defined as a $X$-valued random variable and
 we have$$ \mathbb{E}\|\int_0^t\psi(s)dB^H(s)\|^2\leq 2Ht^{2H-1}\int_0^t \|\psi(s)\|_{\mathcal{L}_2^0}^2ds.
 $$
\end{lemma}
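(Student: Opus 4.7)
The plan is to reduce the $L^2$-bound for the $Y$-valued fBm integral to a scalar Wiener-integral estimate via the series representation in (\ref{int}), and then invoke Lemma \ref{lem1} coordinate-wise in $X$. Since the standard Brownian motions $\{\beta_n\}$ appearing in (\ref{int}) are mutually independent, the Bochner-type It\^{o} isometry applied to each summand gives, for every finite $N$,
\begin{equation*}
\mathbb{E}\Bigl\|\sum_{n=1}^{N}\int_0^t\sqrt{\lambda_n}(K_H^*(\psi e_n))(s)\,d\beta_n(s)\Bigr\|_X^{2}=\sum_{n=1}^{N}\lambda_n\int_0^t\|(K_H^*(\psi e_n))(s)\|_X^{2}\,ds,
\end{equation*}
so the whole argument reduces to bounding $\int_0^t\|(K_H^*(\psi e_n))(s)\|_X^{2}\,ds$ in terms of $\int_0^t\|\psi(s)e_n\|_X^{2}\,ds$.

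The key scalar estimate comes from the fact that $K_H^*$ is an isometry between $\mathcal{H}$ and $L^{2}([0,T])$, which yields $\int_0^T|(K_H^*\varphi)(s)|^{2}\,ds=\|\varphi\|_{\mathcal{H}}^{2}$ for every $\varphi\in L^2([0,T])$; combining this with $\|\varphi\|_{\mathcal{H}}\le\|\varphi\|_{|\mathcal{H}|}$ (valid by the inclusion in Lemma \ref{lem1} together with the fact that the double integral defining $\|\cdot\|_{\mathcal{H}}^{2}$ is dominated pointwise by the one defining $\|\cdot\|_{|\mathcal{H}|}^{2}$) and the bound from Lemma \ref{lem1} produces $\int_0^t|(K_H^*\varphi)(s)|^{2}\,ds\leq 2Ht^{2H-1}\int_0^t|\varphi(s)|^{2}\,ds$. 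I expect the main obstacle to be lifting this scalar inequality to the $X$-valued function $\psi e_n$; I would handle it by fixing an orthonormal basis $\{f_k\}_{k\ge 1}$ of $X$, noting that the scalar projection commutes with $K_H^*$ so that $\langle(K_H^*(\psi e_n))(s),f_k\rangle_X=(K_H^*\langle\psi(\cdot)e_n,f_k\rangle_X)(s)$, applying the scalar bound componentwise, and summing over $k$ via Parseval to obtain
\begin{equation*}
\int_0^t\|(K_H^*(\psi e_n))(s)\|_X^{2}\,ds\leq 2Ht^{2H-1}\int_0^t\|\psi(s)e_n\|_X^{2}\,ds.
\end{equation*}

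Plugging this back into the It\^{o} isometry identity and exchanging sum and integral (justified by non-negativity) gives, for every finite $N$,
\begin{equation*}
\mathbb{E}\Bigl\|\sum_{n=1}^{N}\int_0^t\sqrt{\lambda_n}(K_H^*(\psi e_n))(s)\,d\beta_n(s)\Bigr\|_X^{2}\leq 2Ht^{2H-1}\int_0^t\sum_{n=1}^{N}\lambda_n\|\psi(s)e_n\|_X^{2}\,ds\leq 2Ht^{2H-1}\int_0^t\|\psi(s)\|_{\mathcal{L}_2^0}^{2}\,ds.
\end{equation*}
Under the hypothesis $\int_0^T\|\psi(s)\|_{\mathcal{L}_2^0}^{2}\,ds<\infty$, the same estimate applied to the tail $\sum_{n=N+1}^{M}$ shows that the partial sums form a Cauchy sequence in $L^{2}(\Omega;X)$. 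Hence the series in (\ref{int}) converges in $L^{2}(\Omega;X)$ to a well-defined $X$-valued random variable, and passing to the limit $N\to\infty$ yields the required inequality.
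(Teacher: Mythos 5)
Your proof is correct and follows essentially the route the paper intends: the paper gives no proof of this lemma, deferring to Lemma 2 of \cite{carab}, whose argument is precisely the combination you use of the It\^{o} isometry for the independent $\beta_n$, the isometry property of $K_H^*$ between $\mathcal{H}$ and $L^2([0,T])$, and the chain $\|\cdot\|_{\mathcal{H}}\le\|\cdot\|_{|\mathcal{H}|}$ together with the bound of Lemma \ref{lem1}. Your write-up additionally makes explicit the componentwise lifting of the scalar estimate to $X$-valued integrands and the Cauchy/tail argument for convergence of the series, details the paper leaves implicit.
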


\subsection{Partial Integro-differential Equations}
For better comprehension of the
 subject we shall introduce some  definitions,  hypothesis and
 results. We  refer the reader to \cite{gri82}.
Throughout the rest of the paper we always assume that $X$ is a
Banach space, $A$  and $B(t)$ are closed linear operators on $X$.
$Y$ represents the Banach space  $D(A)$ equipped with the graph
norm defined by
$$
\| y\|_Y= \|Ay\|+\|y\|,\qquad \text{ for }y\in Y.
$$

 We consider the
following the abstract integro-differential problem
\begin{gather}
\frac{dx(t)}{dt}  = Ax(t)+ \int_{0}^{t} B(t-s)x(s) \, ds, \label{eqa1} \\
x(0)  =  x \in X. \label{eqa2}
\end{gather}

\begin{definition} \label{D3} \rm
 A  one-parameter family of bounded linear operators
$(R(t))_{t\geq 0} $  on $X$  is called a resolvent operator  of
\eqref{eqa1}-\eqref{eqa2} if the following conditions are satisfied.
\begin{itemize}
\item[(a)] $R(\cdot): [0, \infty) \to \mathcal{L}(X)$
 is strongly continuous and $R(0)x=x$ for all $x\in X$.

\item[(b)] For  $x \in D(A) $,  $R(\cdot)x \in C([0,\infty), [D(A)])
\cap C^{1}([0,\infty),X)$, and
\begin{gather}\label{eqrp1}
\frac{d R(t)x}{dt}
= A R(t)x + \int_{0}^{t} B(t-s) R (s) x d s, \\
\label{eqrp2} \frac{d R(t)x}{dt}  =  R(t) A x + \int_{0}^{t} R(t-s)
B(s)x d s,
\end{gather}
for every  $t\geq 0 $,
\item[(c)] There exists  some  constants $M>0,\delta$ such that
$\|R(t)\| \leq M e^{\delta t}$ for every $ t\geq 0$.
\end{itemize}
\end{definition}

\begin{definition} \label{def3} \rm
A resolvent operator $(R(t))_{t\geq 0}$ of \eqref{eqa1}-\eqref{eqa2}
 is called exponentially stable if there exists
positive constants $M, \alpha$ such that $\| R(t) \| \leq M e^{-
\alpha t}$.
\end{definition}
The resolvent operators play an important role to study the
existence of solutions and to give a variation of constants
formula for nonlinear systems. We need to know when the linear
system (\ref{eqa1})-(\ref{eqa2}) has a resolvent operator. For
more details on resolvent operators, we refer to
\cite{gri82,Grimmer4}. In this work we  assume that the following
conditions are satisfied:

\begin{itemize}
  \item [$(\mathcal{A}.1)$ ] $A$ is the infinitesimal generator of a strongly continuous semigroup on $X$.
  \item [ $(\mathcal{A}.2)$ ] For all $t \geq0$,  $B(t)$ is a closed linear operator from  $D(A)$ to $X$, and
   $B(t)\in\mathcal{L}(Y,X)$.
For any  $y \in Y$, the map $t \longrightarrow B(t)y$   is
bounded, differentiable and the derivative $B'(t)y$ is bounded and
uniformly continuous on $\R^+$ .\\
\end{itemize}
\begin{theorem} {\cite[Theorem 3.7 ]{gri82}}\label{thm:resol}
Assume that $(\mathcal{A}.1)$ and $(\mathcal{A}.2)$  hold. Then
there exists a unique resolvent operator of the Cauchy problem
(\ref{eqa1})-(\ref{eqa2}).

\end{theorem}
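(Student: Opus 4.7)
The plan follows the classical strategy in Grimmer's paper and splits into a uniqueness argument and an existence construction. Throughout I write $(S(t))_{t\ge 0}$ for the $C_{0}$-semigroup on $X$ generated by $A$, which exists by $(\mathcal{A}.1)$, and I use that $Y=D(A)$ equipped with the graph norm is a Banach space on which $B(t)$ acts boundedly into $X$ by $(\mathcal{A}.2)$.

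For uniqueness, suppose $R_{1}$ and $R_{2}$ both satisfy Definition~\ref{D3}. I fix $x\in D(A)$ and $t>0$ and consider the auxiliary map $\phi(s)=R_{1}(t-s)R_{2}(s)x$ on $[0,t]$. Because $R_{2}(\cdot)x$ takes values in $D(A)$, I differentiate $\phi$ by applying $(\ref{eqrp2})$ to the left factor and $(\ref{eqrp1})$ to the right factor; the two $A$-terms cancel, and after swapping the order of integration the two convolution terms in $B$ also cancel, so $\phi$ is constant. Evaluating at $s=0$ and $s=t$ yields $R_{1}(t)x=R_{2}(t)x$ for every $x\in D(A)$, and density of $D(A)$ in $X$ together with the strong continuity (a) extends the equality to all of $X$.

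For existence, I recast $(\ref{eqa1})$--$(\ref{eqa2})$ by the variation-of-constants formula relative to $S(t)$, so that any resolvent must satisfy the Volterra-type equation
\begin{equation*}
R(t)x=S(t)x+\int_{0}^{t}S(t-s)\int_{0}^{s}B(s-\tau)R(\tau)x\,d\tau\,ds,
\end{equation*}
and I build $R$ by Picard iteration: set $R_{0}(t)=S(t)$ and feed $R_{n}$ into the right-hand side to produce $R_{n+1}$. Using $\|S(t)\|\le Me^{\omega t}$ and the uniform bound on $\|B(t)\|_{\mathcal{L}(Y,X)}$ on compacts coming from $(\mathcal{A}.2)$, a standard Volterra estimate gives $\|R_{n+1}(t)x-R_{n}(t)x\|_{Y}\le C^{n}t^{2n}/(2n)!\,\|x\|_{Y}$ for $x\in D(A)$, so the telescoping series converges strongly to a limit $R(t)x$, uniformly in $t$ on compacts. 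A priori operator bounds on $R_{n}$ together with the density of $D(A)$ extend $R(t)$ to a bounded linear operator on all of $X$.

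It remains to verify the three conditions of Definition~\ref{D3}. Strong continuity (a) and the exponential bound (c) are inherited from the iterates via Gronwall's lemma, with $\delta$ depending on $\omega$, $M$ and $\sup_{[0,T]}\|B(t)\|_{\mathcal{L}(Y,X)}$. The main obstacle is property (b) — namely that $R(\cdot)x\in C^{1}([0,\infty),X)$ with $R(t)x\in D(A)$ and satisfying the two strong equations $(\ref{eqrp1})$--$(\ref{eqrp2})$. I expect this to require the full strength of $(\mathcal{A}.2)$: the boundedness and uniform continuity of $B'(t)$ on $\R^{+}$ permit an integration by parts in $\int_{0}^{t}B(t-s)R(s)x\,ds$ that shifts one derivative from $B$ onto $R$, which is what justifies differentiating the Volterra equation under the integral sign and so promotes the fixed point from a mild to a strong solution. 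Establishing this regularity, together with the compatibility $R(t)x\in D(A)$ needed to make sense of $AR(t)x$, is where I expect the bulk of the technical work to lie.
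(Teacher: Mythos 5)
The paper does not prove this theorem at all: it is imported verbatim as Theorem~3.7 of Grimmer \cite{gri82}, so there is no in-paper argument to compare yours against. Grimmer's actual proof takes a different route from yours: he embeds the problem into the product space $Z=X\times\mathcal{F}$, where $\mathcal{F}$ is a space of forcing functions, shows via a perturbation theorem that the operator $(x,f)\mapsto (Ax+f(0),\,B(\cdot)x+f')$ generates a $C_0$-semigroup $T(t)$ on $Z$ (this is where the boundedness and uniform continuity of $B'(t)y$ enter), and reads off $R(t)x$ as the first component of $T(t)(x,0)$. The payoff of that construction is that boundedness of $R(t)$ on $X$, strong continuity, the exponential estimate, and \emph{both} resolvent equations \eqref{eqrp1}--\eqref{eqrp2} come out of general semigroup theory essentially for free. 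Your uniqueness argument via $\phi(s)=R_1(t-s)R_2(s)x$ is the standard one and is fine.

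Your existence construction, however, has concrete gaps beyond the part you explicitly defer. First, the Picard iteration does not obviously stay in $Y=D(A)$: for $x\in Y$ the term $\int_0^t S(t-s)\int_0^s B(s-\tau)R_n(\tau)x\,d\tau\,ds$ is only an $X$-valued integral a priori, so the claimed bound on $\|R_{n+1}(t)x-R_n(t)x\|_Y$ presupposes a regularity of the inner convolution (differentiability in $s$, hence differentiability of $R_n(\cdot)x$) that must itself be propagated through the induction. Second, the passage from $\mathcal{L}(Y)$ to $\mathcal{L}(X)$ by density is not controlled by your estimates: every iterate is bounded by the graph norm $\|x\|_Y$, and nothing in the Volterra scheme produces a bound of $\|R(t)x\|_X$ by $C\|x\|_X$, which is exactly what condition (c) of Definition~\ref{D3} requires; this is the point where the product-space semigroup (whose Hille--Yosida bound is in the $Z$-norm) is doing real work. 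Third, the second resolvent equation \eqref{eqrp2} does not follow from the fixed-point equation you wrote, which only encodes \eqref{eqrp1}; it needs a separate argument. Since you flag property (b) as "where the bulk of the technical work lies" without carrying it out, the proposal is a plausible programme rather than a proof, and as written the route through the Volterra equation on $Y$ would stall at the extension to $X$.
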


In the remaining  of this section  we discuss the  existence  of
solutions to
\begin{gather} \label{eqanhp1}
 \frac{dx(t)}{dt}
= Ax(t)+ \int_{0}^{t} B(t-s)x(s) \ d s + f(t), \quad   t  \in\geq0,     \\
\label{eqanhp2}    x(0) = z \in X,
\end{gather}
where $f: [0,+\infty)\longrightarrow X $ is a continuous function.
   We begin by introducing the following
concept of strict  solution.

\begin{definition}  \label{D1}
A function  $x : [0,+\infty) \to X $,   is called  a strict
solution of   \eqref{eqanhp1}-\eqref{eqanhp2} on $[0,+\infty) $ if
$ x \in C([0,+\infty),  [D(A)]) \cap C^{1}([0,+\infty), X) $, the
condition \eqref{eqanhp2} holds and the equation  \eqref{eqanhp1}
is satisfied on $[0,+\infty)$.
\end{definition}

\begin{theorem}[{\cite[Theorem 2.5]{gri82}}] \label{formvarconstnr}
Let $z\in X$. Assume that $f \in C([0,+\infty),X)$  and  $x(\cdot)
$ is a strict   solution of  \eqref{eqanhp1}-\eqref{eqanhp2} on
$[0, +\infty)$. Then
\begin{equation}  \label{formulavariation}
x(t) = R(t) z  +  \int_{0}^{t} R(t-s) f(s) \, d s,  \quad t \in
[0,+\infty).
\end{equation}
\end{theorem}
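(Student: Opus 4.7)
The plan is to establish the variation-of-constants formula by comparing $x(t)$ with a quantity built from the resolvent, using a classical ``differentiate-the-convolution'' argument. For fixed $t>0$, I would introduce the auxiliary function
\[
g(s) := R(t-s)\, x(s), \qquad s\in[0,t],
\]
and observe the boundary values $g(0)=R(t)z$ and $g(t)=R(0)x(t)=x(t)$. If I can show that $g$ is continuously differentiable on $[0,t]$, then integrating $g'$ from $0$ to $t$ will give
\[
x(t)-R(t)z \;=\; \int_0^t g'(s)\,ds,
\]
and the task reduces to identifying this integrand with $R(t-s)f(s)$.

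The computation of $g'(s)$ proceeds by the product rule. Since $x(\cdot)$ is a strict solution, $x(s)\in D(A)$ for every $s$, so I can apply the second resolvent identity \eqref{eqrp2} to $x(s)$ and combine it with equation \eqref{eqanhp1}. Being careful with the sign from the chain rule on $R(t-s)$, I expect
\[
g'(s) \;=\; -R(t-s)Ax(s) - \int_0^{t-s} R(t-s-\tau)B(\tau)x(s)\,d\tau + R(t-s)x'(s),
\]
and then substituting $x'(s)=Ax(s)+\int_0^s B(s-\tau)x(\tau)\,d\tau + f(s)$ makes the $R(t-s)Ax(s)$ terms cancel, leaving
\[
g'(s) \;=\; R(t-s)f(s) + R(t-s)\!\int_0^s\! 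B(s-\tau)x(\tau)\,d\tau - \int_0^{t-s}\! R(t-s-\tau)B(\tau)x(s)\,d\tau.
\]

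Integrating over $[0,t]$ gives three terms; the first is exactly the desired $\int_0^t R(t-s)f(s)\,ds$, and the remaining pair of iterated integrals are mirror images of one another. A Fubini exchange, followed by the substitution $(s,\tau)\mapsto(s,s-\tau)$ in one of them (equivalently, writing both over the simplex $\{0\le \tau\le s\le t\}$ in the form $R(t-s)B(s-\tau)x(\tau)$), shows that they cancel. Collecting terms yields formula \eqref{formulavariation}.

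The delicate part of the argument is justifying the differentiation of $g$ and the Fubini step on a Banach-space-valued integrand. The strict-solution hypothesis ensures $x\in C^1([0,\infty),X)\cap C([0,\infty),[D(A)])$, and Definition \ref{D3}(b) combined with hypothesis $(\mathcal{A}.2)$ provides strong continuity of $s\mapsto R(t-s)Ax(s)$ and of the convolution term $\int_0^{t-s}R(t-s-\tau)B(\tau)x(s)\,d\tau$; these regularity facts, standard in Grimmer's framework, are what legitimize both the chain/product rule used for $g'$ and the interchange of order of integration in the cancellation step.
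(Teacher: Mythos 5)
Your argument is correct and complete: the auxiliary function $g(s)=R(t-s)x(s)$, the use of identity \eqref{eqrp2} (the right choice, since \eqref{eqrp1} would not produce the cancellation with $R(t-s)Ax(s)$), and the change of variables showing the two iterated integrals coincide over the simplex $\{0\le\tau\le s\le t\}$ all check out. Note that the paper itself offers no proof of this statement --- it is imported verbatim as Theorem~2.5 of Grimmer's 1982 paper --- and your differentiate-the-convolution argument is essentially Grimmer's own, so there is nothing to compare beyond observing that you have correctly reconstructed the standard proof, including the genuinely delicate points (justifying the product rule via continuity of $x$ in the graph norm together with Definition~\ref{D3}(b), and the continuity of the integrand needed for the Fubini exchange).
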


Motivated by  \eqref{formulavariation}, we introduce the following
concept.

\begin{definition}  \label{D2} \rm
A function $u \in C([0,+\infty),X) $ is called  a mild solution of
\eqref{eqanhp1}-\eqref{eqanhp2} if
$$
u(t) = R(t)z + \int_{0}^{t} R(t-s) f(s)\, d s, \quad t\in[0,T], \;
\text{ for } z\in X.
$$
\end{definition}

\section{Controllability Result}
In this section we study the controllability results  for Equation
(\ref{eq1}). Before starting, we introduce the concept of a mild
solution of the problem (\ref{eq1})  and controllability of
 neutral integro-differential stochastic functional differential equation.
 Motivated by the theory of resolvent operator, we introduce  the
following concept of mild solution for equation (\ref{eq1}).

\begin{definition}
An $X$-valued stochastic  process $\{x(t),\;t\in[-\tau,T]\}$, is
called a mild solution of equation (\ref{eq1}) if
\begin{itemize}
\item[$i)$] $x(.)\in \mathcal{C}([-\tau,T],\mathbb{L}^2(\Omega,X))$,
\item[$ii)$] $x(t)=\varphi(t), \, -\tau \leq t \leq 0$.
\item[$iii)$]For arbitrary $t \in [0,T]$, we have
\begin{equation}\label{mild}
\begin{array}{lll}
x(t)&=&R(t)(\varphi(0)+G(0,\varphi(-r(0))))-G(t,x(t-r(t)))\\
&+& \int_0^t R(t-s)[H u(s)+F(s,x(s-\rho (s)))]ds\\
&+&\int_0^t R(t-s)\sigma(s)dB^H(s)\;\;\; \mathbb{P}-a.s.
\end{array}
\end{equation}
\end{itemize}
\end{definition}

\begin{definition}
The  system (\ref{eq1}) is said to be controllable on the interval
$[-\tau,T]$, if for every initial stochastic process $\varphi$
defined $[-\tau,0]$ and  $x_1\in X$, there exists a stochastic
control $u\in L^2([0,T], U)$ such that the mild solution $x(.)$ of
(\ref{eq1}) satisfies $x(T)=x_1$.
\end{definition}

Roughly speaking, controllability problem for evolution system
consists in driving the state of the  system (the mild solution of
the controlled equation under consideration)  from an arbitrary
initial state to an arbitrary final state in finite time.

To prove the controllability result, we consider  the following
assumptions:
\begin{itemize}
\item [$(\mathcal{H}.1)$] The resolvent operator $(R(t))_{t\geq0}$ given by
$(\mathcal{A}.1)$ $(\mathcal{A}.2)$ satisfies the following
condition: there is a positive constant $M$ such that $$\sup_{0\leq
s,t\leq T} \|R(t-s)\|\leq M.$$

\item [$(\mathcal{H}.2)$]The function $f:[0,+\infty)\times X \rightarrow X$ satisfies the following Lipschitz
 conditions:
that is, there exist positive constants $C_i:=C_i(T), i=1,2$ such that, for all $t\in [0,T]$ and $x,y\in X $
\begin{itemize}
 \item[(i)] $\|F(t,x)-F(t,y)\|\leq C_1 \|x-y\|.$
\item[(ii)] $\|F(t,x)\|^2\leq C_2 (1+\|x\|^2).$
\end{itemize}
\item [$(\mathcal{H}.3)$] The function $G:[0,+\infty)\times X\longrightarrow X$   satisfies the following
conditions: there exist positive constants $C_3 $ and $C_4$,
$C_3<\frac{1}{2}$, such that,   for all $t\in [0,T]$ and $x,y\in X$
\begin{itemize}
 \item [(i)] $\|G(t,x)-G(t,y)\| \leq C_3 \|x-y\|.$
\item [(ii)] $\|G(t,x)\|^2 \leq C_4 (1+\|x\|^2).$
\end{itemize}
\item [$(\mathcal{H}.4)$]The function $G$ is continuous in the quadratic mean sense:
$$\mbox{For all}\;\; x\in \mathcal{C}([0,T], \mathbb{L}^2(\Omega, X)),\;\;
\lim_{t\rightarrow s}\mathbb{E}\|G(t,x(t))-G(s,x(s))\|^2=0.$$
\item [$(\mathcal{H}.5)$]The function $\sigma:[0,+\infty)\rightarrow \mathcal{L}_2^0(Y,X)$ satisfies
 $$\int_0^T\|\sigma(s)\|^2_{\mathcal{L}_2^0}ds< \infty,\;\; \forall T>0.
  $$
\end{itemize}
\begin{itemize}
\item [$(\mathcal{H}.6)$] The linear operator $W$ from $U$ into $X$
 defined by
 $$
Wu=\int_0^TR(T-s)H u(s)ds
 $$
 has an inverse operator $W^{-1}$ that takes values in $L^2([0,T],U)\setminus ker
 W$, where $ker
 W=\{x\in L^2([0,T],U),  \; W x=0\}$ (see \cite{klam07}), and there
 exists finite
 positive constants $M_b,$ $M_w$ such that $\|B\|\leq M_b$ and $\|W^{-1}\|\leq M_w.$
\end{itemize}
Moreover, we assume that $\varphi \in \mathcal{C}([-\tau,0],\mathbb{L}^2(\Omega,X))$.\\

We can now state the main result of this paper.
\begin{theorem}\label{th1}
Suppose that $(\mathcal{H}.1)-(\mathcal{H}.6)$ hold. Then,  the
system  (\ref{eq1}) is controllable  on $[-\tau,T]$.
\end{theorem}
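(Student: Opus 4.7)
The plan is to apply the Banach fixed-point theorem to an operator whose fixed points are automatically mild solutions of \eqref{eq1} satisfying $x(T)=x_{1}$. Given $x\in\mathcal{C}([-\tau,T],\mathbb{L}^{2}(\Omega,X))$, define the steering control
\begin{align*}
u_{x}(t)=W^{-1}\Bigl[ & x_{1}-R(T)(\varphi(0)+G(0,\varphi(-r(0))))+G(T,x(T-r(T)))\\
 & -\int_{0}^{T}R(T-s)F(s,x(s-\rho(s)))\,ds-\int_{0}^{T}R(T-s)\sigma(s)\,dB^{H}(s)\Bigr](t),
\end{align*}
which by $(\mathcal{H}.6)$ lies in $L^{2}([0,T],U)$, and then the operator
$\Pi:\mathcal{C}([-\tau,T],\mathbb{L}^{2}(\Omega,X))\to\mathcal{C}([-\tau,T],\mathbb{L}^{2}(\Omega,X))$ by $\Pi(x)(t)=\varphi(t)$ on $[-\tau,0]$ and by the right-hand side of \eqref{mild} with $u$ replaced by $u_{x}$ on $[0,T]$. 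Direct substitution at $t=T$ shows $\Pi(x)(T)=x_{1}$, so a fixed point of $\Pi$ is a controllable mild solution, and the theorem reduces to finding one.

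Verifying that $\Pi$ maps the space into itself (finite second moments and mean-square continuity in $t$) is bookkeeping using $(\mathcal{H}.1)$--$(\mathcal{H}.5)$ and the boundedness of $W^{-1}$ and of the control operator, with the stochastic convolution $\int_{0}^{t}R(t-s)\sigma(s)\,dB^{H}(s)$ controlled by Lemma~\ref{lem2}. For the contractivity estimate I form $\Pi(x)(t)-\Pi(y)(t)$: the $R(t)(\varphi(0)+G(0,\varphi(-r(0))))$ and the fractional Wiener-integral contributions cancel, leaving a direct neutral block $-(G(t,x(t-r(t)))-G(t,y(t-r(t))))$, a drift block $\int_{0}^{t}R(t-s)(F(s,x(s-\rho(s)))-F(s,y(s-\rho(s))))\,ds$, and a control block $\int_{0}^{t}R(t-s)H(u_{x}(s)-u_{y}(s))\,ds$. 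By $(\mathcal{H}.3)$(i) the neutral block is bounded in $\mathbb{L}^{2}$ by $C_{3}\sup_{s}\|x(s)-y(s)\|_{\mathbb{L}^{2}}$, while the drift and control blocks carry an extra factor proportional to $T$ coming from the outer convolution after Cauchy--Schwarz, using $(\mathcal{H}.2)$(i), $(\mathcal{H}.6)$ and the cancellation $u_{x}-u_{y}=W^{-1}[\text{only the }G\text{- and }F\text{-differences}]$. Assembling these bounds via $(a+b+c)^{2}\leq 3(a^{2}+b^{2}+c^{2})$ yields
\[
\sup_{t\in[-\tau,T]}\mathbb{E}\|\Pi(x)(t)-\Pi(y)(t)\|^{2}\leq \kappa(T)\sup_{t\in[-\tau,T]}\mathbb{E}\|x(t)-y(t)\|^{2},
\]
with $\kappa(T)\to \Theta\,C_{3}^{2}$ as $T\to 0$ for an explicit constant $\Theta$ absorbing the factors from the control block. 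Because $C_{3}<\tfrac{1}{2}$, one has $\kappa(T)<1$ for $T$ small enough; a standard step-by-step extension on sub-intervals of length $T_{1}\leq T$ then promotes the local fixed point to one on $[-\tau,T]$.

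The most delicate point, and the one I expect to cause trouble, is the feedback nature of $u_{x}$: the control itself contains the terms $G(T,x(T-r(T)))$ and $\int_{0}^{T}R(T-s)F(s,x(s-\rho(s)))\,ds$, so an inattentive estimate produces contributions to $\kappa(T)$ that do not vanish with $T$ and compete with the direct $C_{3}^{2}$ coming from the neutral block. Careful bookkeeping is needed to ensure that every control-block contribution picks up an explicit $T$-factor from the outer convolution $\int_{0}^{t}R(t-s)\cdots\,ds$, so that only the neutral block governs the limiting contraction constant and the hypothesis $C_{3}<\tfrac{1}{2}$ suffices to close the argument.
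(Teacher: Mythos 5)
Your proposal is correct and follows essentially the same route as the paper: the same steering control built from $W^{-1}$, the same solution operator, mean-square continuity plus a Banach contraction estimate, and the observation that the drift and control blocks carry a factor of $t$ from the outer convolution so that the limiting contraction constant is a multiple of $C_{3}^{2}$ (the paper gets $\gamma(0)=4C_{3}^{2}<1$ from $C_{3}<\tfrac12$), followed by the same step-by-step extension to $[-\tau,T]$. The delicate point you flag at the end is exactly how the paper's estimate closes, so no gap remains beyond writing out the bookkeeping.
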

\begin{proof}
Fix $T>0$ and let  $\mathcal{B}_T := \mathcal{C}([-\tau, T],
\mathbb{L}^2(\Omega, X))$ be the Banach space of all   continuous
functions from $[-\tau, T]$ into $\mathbb{L}^2(\Omega, X)$,
equipped  with the  supremum norm
$\|\xi\|_{\mathcal{B}_T}=\displaystyle\sup_{u \in
[-\tau,T]}\left(\mathbb{E} \|\xi (u)\|^2\right)^{1/2}$ and let us
consider the set
 $$S_T=\{x\in \mathcal{B}_T : x(s)=\varphi(s),\; \mbox {for} \;\;s \in [-\tau,0] \}.$$
 $S_T$ is a closed subset of $\mathcal{B}_T$ provided with the norm  $\|.\|_{\mathcal{B}_T}$.

Using the hypothesis (H6) for an arbitrary function $x(.)$, define
the control

\begin{equation}\label{control}
\begin{array}{lll}
  u(t) & =&W^{-1}\{x_1-R(T)(\varphi(0)+G(0,\varphi(-r(0))))+G(T,x(T-r(T)))\\ \\
   & -&\int_0^T R(T-s)F(s-\rho   (s))ds-\int_0^TR(T-s)\sigma(s)dB^H(s)\}(t).
   \end{array}
\end{equation}

We shall now show that when using this control, the operator  $\Phi$
defined  on  $S_T$ by  $\Phi(x)(t)=\varphi(t)$ for $t\in [-\tau,0]$
 and for $t\in [0,T]$
 \begin{equation}\label{operator}
 \begin{array}{lll}
 \Phi(x)(t)&=&R(t)(\varphi(0)+G(0,\varphi(-r(0))))-G(t,x(t-r(t)))\\
&+&\int_0^t R(t-s)[H u(s)+F(s-\rho
(s))]ds]+\int_0^tR(t-s)\sigma(s)dB^H(s)
 \end{array}
 \end{equation}

 has a fixed point. Substituting (\ref{control}) in (\ref{operator})
 we can show that $\psi x(T)=x_1$, which means that the
control $u$ steers the system from the initial state $\varphi$ to
$x_1$ in time $T$,  provided we can obtain a fixed point of the
operator $\psi$ which implies that the system in controllable.

 Next we will show by using Banach fixed point
theorem that $\psi$ has a unique fixed point. We divide the
subsequent proof into two steps.

Step 1: For arbitrary $x\in S_T$, let us prove that $t\rightarrow \Phi(x)(t)$ is continuous on the interval $[0, T]$ in the $\mathbb{L}^2(\Omega,X)$-sense.\\
Let $0 <t<T$  and $|h|$  be sufficiently small. Then for any fixed $x\in S_T$, we have
\begin{eqnarray*}
\E\|\Phi(x)(t&+&h)-\Phi(x)(t)\|^2\leq 5 \E\|(R(t+h)-R(t))(\varphi(0)+G(0,\varphi(-r(0))))\|\\
&+&5\E\|G(t+h,x(t+h-r(t+h)))-G(t,x(t-r(t)))\|\\
&+&5\E\|\int_0^{t+h} R(t+h-s)F(s,x(s-\rho (s)))ds-\int_0^t R(t-s)F(s,x(s-\rho (s)))ds\|^2\\
&+&5\E\|\int_0^{t+h}R(t+h-s)\sigma(s)dB^H(s)-\int_0^tR(t-s)\sigma(s)dB^H(s)\|\\
&+&5\E\|\int_0^{t+h}R(t+h-\nu)HW^{-1}\{x_1-R(T)(\varphi(0)+G(0,\varphi(-r(0))))\\
&+&G(T,x(T-r(T)))    -\int_0^T R(T-s)F(s,x(s-\rho (s)))ds\\ \\
&-&\int_0^T
   R(T-s)\sigma(s)dB^H(s)\}d\nu \\
   \\&-& \int_0^{t}R(t-\nu)HW^{-1}\{x_1-R(T)(\varphi(0)+G(0,\varphi(-r(0))))
+G(T,x(T-r(T)))   \\   \\
    & -&\int_0^T R(T-s)F(s,x(s-\rho (s)))ds -\int_0^T
   R(T-s)\sigma(s)dB^H(s)\}d\nu\\
   \\
 &=& \sum_{1\leq i \leq 5}5\E\|I_i(h)\|^2.
\end{eqnarray*}
We are going to show that each function $t\rightarrow I_i(t)$ is
continuous on $[0,T]$ in the $L^2$ sens.\\
 By the strong
continuity of $R(t)$, we have
$$\lim_{h\rightarrow 0}(R(t+h)-R(t))(\varphi(0)+G(0,\varphi(-r(0))))=0.$$
 The  condition $(\mathcal{H}.1)$ assures that $$\|(R(t+h)-R(t))
 (\varphi(0)+G(0,\varphi(-r(0))))\|\leq 2M \|\varphi(0)+G(0,\varphi(-r(0)))\| \in \mathbb{L}^2(\Omega).$$
 Then we conclude by the Lebesgue dominated theorem that $$\lim_{h\rightarrow
 0}\mathbb{E}\|I_1(h)\|^2=0.
 $$
 By using the fact that the operator $G$ is continuous in the quadratic mean sense,
 we conclude by condition $(\mathcal{H}.4)$ that
  $$\lim_{h\rightarrow 0}\mathbb{E}\|I_2(h)\|^2=0.
  $$

For the third term $I_3(h)$, we suppose that $h>0$ (Similar estimates hold for $h<0$), then we have
\begin{eqnarray*}
\|I_3(h)\|&\leq & \|\int_0^t (R(t+h-s)-R(t-s))F(s,x(s-r(s)))ds\|\\
&& +\|\int_t^{t+h} R(t+h-s)F(s,x(s-r(s)))ds\|\\
&\leq & I_{31}(h)+I_{32}(h).
\end{eqnarray*}
 By H\"older's inequality, one has that
$$\mathbb{E}|I_{31}(h)|^2\leq t\mathbb{E}\int_0^t \|(R(t+h-s)-R(t-s))F(s,x(s-r(s)))\|^2ds$$
By using the strong continuity of $R(t)$, we have for each $s \in
[0,t]$, $$\lim_{h\rightarrow 0}(R(t+h-s)-R(t-s)) F(s,x(s-r(s)))=0.
$$
By using condition $(\mathcal{H}.1)$, condition $(ii)$ in
$(\mathcal{H}.2)$,  we obtain
\begin{eqnarray*}
&&\|(R(t+h-s)-R(t-s))F(s,x(s-r(s)))\|^2\\
&& \;\;\;\leq 4M^2 \|F(s,x(s-r(s)))\|^2 ,
 \end{eqnarray*}

then, we conclude by the Lebesgue dominated theorem that
$$
\displaystyle\lim_{h\rightarrow 0}\mathbb{E}\|I_{31}(h)\|^2=0.
$$
 By conditions $(\mathcal{H}.1)$, $(\mathcal{H}.2)$  and H\"older's inequality, we get
$$\mathbb{E}\|I_{32}(h)\|^2 \leq  C_2^2hM^2\int_0^T  (\mathbb{E} \|x(s-r(s))\|^2+1)ds,$$
then  $$\lim_{h\rightarrow 0}\mathbb{E}\|I_3(h)\|^2 =0.
$$

For the term $I_4(h)$, we have
\begin{eqnarray*}
\|I_4(h)\|&\leq & \|\int_0^t (R(t+h-s)-R(t-s))\sigma(s)dB^H(s)\|\\
&+&\|\int_t^{t+h} R(t+h-s)\sigma(s)dB^H(s)\|\\
&\leq & I_{41}(h)+I_{42}(h).
\end{eqnarray*}

By condition $(\mathcal{H}.1)$ and  Lemma \ref{lem2}, we get that
\begin{eqnarray*}
E|I_{41}(h)|^2&\leq &2Ht^{2H-1}\int_0^t \|(R(t+h-s)-R(t-s))\sigma(s)\|_{\mathcal{L}_2^0}^2ds\\
\end{eqnarray*}
 Since $\displaystyle\lim_{h\rightarrow 0} \|(R(t+h-s)-R(t-s))\sigma(s)\|_{\mathcal{L}_2^0}^2=0$  and
 $$\|(R(t+h-s)-R(t-s))\sigma(s)\|_{\mathcal{L}_2^0}^2\leq 4 M^2 \|\sigma(s)\|_{\mathcal{L}_2^0}^2\in \mathbb{L}^1([0,T],ds),$$
 we conclude, by the dominated convergence theorem that,
 $$ \lim_{h\rightarrow 0}\mathbb{E}|I_{41}(h)|^2=0 .
  $$
 Again by Lemma \ref{lem2}, we get that
$$\mathbb{E}|I_{42}(h)|^2\leq 2Hh^{2H-1}M^2\int_t^{t+h} \|\sigma(s)\|_{\mathcal{L}_2^0}^2ds\rightarrow 0.
$$

\ni Next,  let's observe that

$$
\begin{array}{ll}
 \E\|I_{5}(h)\|^2 &\leq 2\E
\|
\int_t^{t+h}R(t+h-\nu)BW^{-1}\{x_1-R(T)(\varphi(0)+G(0,\varphi(-r(0))))\\\\
&+ G(T,x(T-r(T))) -\int_0^T R(T-s)F(s,x(s-\rho (s)))ds \\\\
&-\int_0^T
   R(T-s)\sigma(s)dB^H(s)\}d\nu\|^2\\\\
   &+2\E\|\int_0^{t}(R(t+h-\nu)-R(t-\nu))BW^{-1}\{x_1-R(T)(\varphi(0)+G(0,\varphi(-r(0))))\\\\
   &+G(T,x(T-r(T)))      -\int_0^T R(T-s)F(s,x(s-\rho (s)))ds\\ \\
   &-\int_0^T
   R(T-s)\sigma(s)dB^H(s) \}d\nu\|^2
   \\\\
   &\leq 2[\E\|I_{5,1}(h)\|^2+\E\|I_{5,2}(h)\|^2].
\end{array}
$$

 Let's first deal with $I_{5,1}(h)$, using  conditions
 $(\mathcal{H}.1)-(\mathcal{H}.6)$ and H\"{o}lder inequality, it follows that

$$
\begin{array}{ll}
  \E\|I_{5,1}(h)\|^2 &\leq5 M^2M_b^2M_w^2\int_t^{t+h}\{\E\|x_1\|^2+M^2\E\|\varphi(0)+G(0,\varphi(-r(0)))\|^2\\\\
  &+C_4^2(1+\sup_{s\in[-\tau,T]}\E\|x(s)\|^2) +M^2TC_2^2(1+\sup_{s\in[-\tau,T]}\E\|x(s)\|^2)\\ \\
  &+2M^2HT^{2H-1}\int_0^T\|\sigma(s)\|_{\mathcal{L}_2^0}^2ds\}d\nu.
\end{array}
$$

It results that
$$
 \lim_{h\rightarrow 0}\mathbb{E}||I_{5,1}(h)||^2=0.
$$
 In a similar way, we have
$$
\begin{array}{ll}
  \E\|I_{5,2}(h)\|^2 & \leq5 M_b^2M_w^2\int_0^{t}\|(R(t+h-\nu)-R(t-\nu))\|^2\{\E\|x_1\|^2\\ \\
  &+M^2\E\|\varphi(0)+G(0,\varphi(-r(0)))\|^2
  + C_4^2(1+\sup_{s\in[-\tau,T]}\E\|x(s)\|^2)\\\\
     & +M^2
   T^2C_2^2(1+\sup_{s\in[-\tau,T]}\E\|x(s)\|^2)+2M^2HT^{2H-1}\int_0^T\|\sigma(s)\|_{\mathcal{L}_2^0}^2ds\}d\nu.
\end{array}
$$
Since
$$
\begin{array}{ll}
 & \|R(t+h-\nu)-R(t-\nu)\|^2\{\E\|x_1\|^2+M^2\E\|\varphi(0)+G(0,\varphi(-r(0)))\|^2
  \\ \\
  &+ C_4^2(1+\sup_{s\in[-\tau,T]}\E\|x(s)\|^2)+M^2T^2C_2^2(1+\sup_{s\in[-\tau,T]}\E\|x(s)\|^2)\\ \\
  & +2M^2HT^{2H-1}\int_0^T\|\sigma(s)\|_{\mathcal{L}_2^0}^2ds\}\\ \\
 &  \leq  4M^2
   \{\E\|x_1\|^2+M^2\E\|\varphi(0)+G(0,\varphi(-r(0)))\|^2+ C_4^2(1+\sup_{s\in[-\tau,T]}\E\|x(s)\|^2)\\ \\
  & +M^2
   T^2C_2^2(1+\sup_{s\in[-\tau,T]}\E\|x(s)\|^2)+2M^2HT^{2H-1}\int_0^T\|\sigma(s)\|_{\mathcal{L}_2^0}^2ds\}  \in
  L^1([0,T],ds]),
\end{array}
$$
we conclude, by the dominated  convergence  theorem that,
$$
\lim_{h\rightarrow 0}\mathbb{E}||I_{5,2}(h)||^2=0.
$$

The above arguments show that $\displaystyle\lim_{h\rightarrow
0}\mathbb{E}\|\Phi(x)(t+h)-\Phi(x)(t)\|^2=0$.
 Hence, we conclude that  the function  $t \rightarrow \Phi(x)(t)$ is continuous on $[0,T]$
 in the $\mathbb{L}^2$-sense.\\

\ni Step 2.  Now, we are going to show that $\Phi$ is a
contraction mapping in $S_{T_1}$ with some $T_1\leq T$ to be
specified later.

 Let $x,y\in S_T$  we obtain for any fixed  $t\in [0,T]$
\begin{eqnarray*}
\|\Phi(x)(t)&-&\Phi(y)(t)\|^2\\
&\leq&4\|G(t,x(t-r(t)))-G(t,y(t-r(t)))\|^2\\ \\
&&+4\|\int_0^tR(t-s)[F(s,x(s-\rho(s)))-F(s,y(s-\rho(s)))]ds\|^2\\ \\
&&+4\|
\int_0^{t}R(t-\nu)BW^{-1}[G(T,x(T-r(T)))-G(T,y(T-r(T)))]d\nu\|^2\\
\\
 &&+ 4\|\int_0^{t}R(t-\nu)BW^{-1}\int_0^T R(T-s)[F(s,x(s-\rho
 (s)))-F(s,y(s-\rho (s)))]dsd\nu\|^2.
 \end{eqnarray*}
By Lipschitz property of $ F$ and $ G$ combined with  H\"older's
inequality, we obtain
\begin{eqnarray*}
\mathbb{E}\|\Phi(x)(t)-\Phi(y)(t)\|^2 &\leq &4 C_3^2
\mathbb{E}\|x(t-r(t))-y(t-r(t))\|^2\\
&&+  4M^2C_1^2t
\int_0^t\mathbb{E}\|x(s-r(s))-y(s-r(s))\|^2ds\\
&&+4\bf{t}
M^2M_b^2M_w^2[\E\|x(T-r(T))-y(T-r(T))\|^2\\\\
&&+T^2C_1^2M^2 \sup_{s\in[-\tau,t]} \mathbb{E}\|x(s)-y(s)\|^2.
\end{eqnarray*}
Hence
$$\sup_{s\in[-\tau,t]}\mathbb{E}\|\Phi(x)(s)-\Phi(y)(s)\|^2\leq
\gamma(t) \sup_{s\in[-\tau,t]} \mathbb{E}\|x(s)-y(s)\|^2.$$ where
$$ \gamma(t)=4[C_3^2+M^2C_1^2t^2+\bf{t}
M^2M_b^2M_w^2(1+T^2C_1^2M^2)].
$$

 By condition $(iii)$ in  $(\mathcal{H}.3)$, we have
$\gamma(0)=4  C_3^2 <1$. Then there exists $0<T_1\leq T $ such that
$0<\gamma(T_1)<1$ and $\Phi$ is a contraction mapping on $S_{T_1}$
and therefore has a unique fixed point, which is a mild solution of
equation (\ref{eq1}) on $[-\tau,T_1]$. This procedure can be
repeated in order to extend the solution to the entire interval
$[-\tau,T]$ in finitely  many steps. Clearly, $(\psi x)(T)=x_1$
which implies that the system (\ref{eq1}) is controllable on
$[-\tau,T]$.    This completes the proof.
\end{proof}

\section{Example}

 We consider the following   stochastic partial neutral functional integro-differential
  equation with  finite delays $\tau_1$ and $\tau_2$ $(0\leq
\tau_i\leq\tau<\infty,\; i=1,2)$, driven by a    fractional Brownian
motion of the form
   \small{
   \begin{equation}\label{eq:integrodiff}
 \left\{\begin{array}{llll}
 \frac{\partial}{\partial t
}[x(t,\xi)+g(t,x(t-\tau_1,\xi))]=\frac{\partial^2}{\partial^2\xi}
[x(t,\xi)+g(t,x(t-\tau_1,\xi))]\\ \\
+\int_0^tb(t-s)\frac{\partial^2}{\partial^2\xi}[x(s,\xi)+g(s,x(s-\tau_1,\xi))]ds \\ \\
+f(t,x(t-\tau_2,\xi))+\mu(t,\xi)+\sigma (t)\frac{dB^H}{dt}(t),\qquad t\geq0,  \\ \\
x(t,0)+g(t,x(t-\tau_1,0))=0,\qquad t\geq0, \\  \\
x(t,\pi)+g(t,x(t-\tau_1,\pi))=0,\quad t\geq0,  \\  \\

x(s,\xi)=\varphi(s,\xi) ,\,\;-\tau \leq s \leq 0\quad a.s.,
\end{array}\right.
\end{equation}
} where  $B^H(t)$ is   a    fractional Brownian motion, $f$,
$g:\R^+\times\R\longrightarrow\R$ are continuous functions and
$b:\R^+\longrightarrow\R$ is continuous function and $\varphi:
[-\tau,0]\times[0,\pi]\longrightarrow\R$ is a given continuous
function such that $\varphi(s,.)\in L^2([0,\pi])$ is measurable and
satisfies $\E\|\varphi\|^2<\infty.$

We rewrite (\ref{eq:integrodiff}) into abstract form of
(\ref{eq1}),  let $X=L^2([0,\pi])$. Define the operator
$A:D(A)\subset X\longrightarrow X$ given  by
$A=\frac{\partial^2}{\partial^2\xi}$ with domain
$$
D(A)= H^2([0,\pi])\cap H^1_0([0,\pi]),
$$
then we get
$$
Ax=\sum_{n=1}^\infty n^2<x,e_n>_Xe_n,\quad x\in D(A),
$$
 where $
e_n:=\sqrt{\frac{2}{\pi}}\sin nx,\; n=1,2,.... $
 is  an orthogonal  set of eigenvector of $-A$.\\

 It is well known that $A$ is the infinitesimal generator of a
 strongly  continuous  semigroup  of bounded linear operators $\{S(t)\}_{t\geq 0}$ in $X$,  thus $(\mathcal{A}.1)$ is true.
 Furthermore, $\{S(t)\}_{t\geq 0}$    is given by
(see \cite{pazy})
$$ S(t)x=\sum_{n=1}^{\infty}e^{-n^2t}<x,e_n>e_n
$$
for $x\in X$ and $t\geq0$,  that satisfies  $\|S(t)\|\leq
e^{-\pi^2t}$ for every $t\geq0$. \\
Let  $B:D(A)\subset X\longrightarrow X $ be the operator given by
$$
B(t)z=b(t)Az\qquad\text{ for } t\geq0 \;\text{ and  } z\in D(A).
  $$

 We assume that the following conditions
hold:
\begin{itemize}
  \item [(i) ]  Let  $Hu: [0,T]\longrightarrow X$  be  defined by
  $$
Hu(t)(\xi)=\mu(t,\xi),\;0\leq \xi \leq\pi, \, u\in L^2([0,T], U).
  $$
  \item [(ii) ]  Assume  that  the operator $W:L^2([0,T], U)\longrightarrow X $ given  by
  $$
W u(\xi)=\int_0^TR(T-s)\mu(t,\xi)ds,\;\;0\leq \xi \leq\pi,
  $$
  has  a bounded  invertible operator $W^{-1}$  and satisfies condition
  $(\mathcal{H}.6)$. For the construction of the operator $W$ and
  its inverse, see \cite{qui85}.


  \item [(iii) ] for $t\in[0,T]$, $f(t,0)=g(t,0)=0,$
  \item [(iv) ] there exist    positive constants $C_1$, and  $C_3$, $ C_3<\frac{1}{2}$,  such that
  $$
|f(t,\xi_1)-f(t,\xi_2)|\leq C_1|\xi_1-\xi_2|, \text{ for } t\in[0,T]
\text{ and } \xi_1, \xi_2 \in \R,
  $$
$$
|g(t,\xi_1)-g(t,\xi_2)|\leq C_3|\xi_1-\xi_2|, \text{ for } t\in[0,T]
\text{ and }  \; \xi_1, \xi_2 \in \R,
$$
  \item [(v) ] there exist   positive constants  $C_2$ and $C_4$, such that
  $$
|f(t,\xi) |\leq C_2(1+|\xi |^2), \text{ for } t\in[0,T] \text{ and }
\xi \in \R,
  $$
$$
|g(t,\xi) |\leq C_4(1+|\xi |^2), \text{ for } t\in[0,T] \text{ and }
\xi \in \R,
  $$

  \item [(vi) ] the function $\sigma:[0,+\infty)\rightarrow \mathcal{L}_2^0(L^2([0,\pi]),L^2([0,\pi]))$ satisfies
 $$\int_0^T\|\sigma(s)\|^2_{\mathcal{L}_2^0}ds< \infty,\;\; \forall T>0.
  $$
\end{itemize}
Define the operators $F, G: \R^+\times L^2([0,\pi])\longrightarrow
L^2([0,\pi]) $ by
$$
F(t,\phi)(\xi)=f(t,\phi(-\tau_1)(\xi))\; \text{ for } \xi\in[0,\pi]
\text{ and } \phi\in L^2([0,\pi]),
$$
and
$$
G(t,\phi)(\xi)=g(t,\phi(-\tau_2)(\xi)),\;\text{ and } \phi\in
L^2([0,\pi])
$$

 If we put
\begin{equation}
 \left\{\begin{array}{ll}
x(t)(\zeta)=x(t,\zeta),\;t\in[0,T]\text{ and } \;\zeta\in[0,\pi]\\
x(t,\zeta)= \varphi(t,\zeta), \; t\in[-\tau,0] \text{ and }\;
\zeta\in[0,\pi],
 \end{array}\right.
\end{equation}

then,  the problem (\ref{eq:integrodiff}) can be written in the
abstract form

  \begin{equation*}
 \left\{\begin{array}{llll}
d[x(t)+G(t,x(t-r(t)))]&&=[Ax(t)+G(t,x(t-r(t)))]dt+\int_0^tB(t-s)[x(s) \\
\\ &&+G(s,x(s-r(s)))]dsdt+[F(t,x(t-\rho(t)))+Hu(t)]dt \\
\\ &&+\sigma (t)dB^H(t),\;0\leq t
\leq T,  \\
 x(t)=\varphi (t), \; -\tau \leq t \leq 0.  
\end{array}\right.
 \end{equation*}

Moreover,  if $b$ is bounded and $C^1$ function such that $b'$ is
bounded and uniformly continuous, then $(\mathcal{A}.1)$ and
$(\mathcal{A}.2)$ are satisfied and hence, by Theorem
\ref{thm:resol}, Equation (\ref{eq:integrodiff}) has a resolvent
operator $(R(t))_{t\geq0}$ on $X$. As a consequence of the
continuity of $f$ and $g$ and assumption (iii) it follows that $F$
and $G$ are continuous. By assumption (iv), one can see that
$$
\|F(t,\phi_1)-F(t,\phi_1)\|_{L^2([0,\pi])}\leq
C_1\|\phi_1-\phi_2\|_{L^2([0,\pi])},
$$
$$
\|G(t,\phi_1)-G(t,\phi_1)\|_{L^2([0,\pi])}\leq
C_3\|\phi_1-\phi_2\|_{L^2([0,\pi])}, \;\text{  with } \;
C_3<\frac{1}{2}.
$$

Furthermore, by assumption (v), it follows that

$$
\|F(t,\phi) \|\leq C_2(1+\|\phi \|^2),  \text{ for } t\in[0,T],
  $$
$$
\|G(t,\phi) \|\leq C_4(1+\|\phi \|^2),  \text{ for } t\in[0,T].
$$

then all the assumptions of Theorem \ref{th1} are fulfilled.
Therefore, we conclude that the system (\ref{eq:integrodiff}) is
controllable on $[-\tau,T]$.


\end{document}